\newtheorem{theorem}{Theorem}
\newtheorem{corollary}{Corollary}
\newtheorem{proposition}{Proposition}
\newtheorem{remark}{Remark}
\newtheorem{example}{Example}
\DeclareMathOperator{\cone}{cone}
\DeclareMathOperator{\intr}{int}
\DeclareMathOperator{\card}{card}
\DeclareMathOperator{\argmin}{argmin}
\DeclareMathOperator{\iso}{iso}
\newcommand{\lng}{\langle}
\newcommand{\rng}{\rangle}
\newcommand{\lf}{\left}
\newcommand{\rg}{\right}
\newcommand{\R}{\mathbb R}
\newcommand{\N}{\mathbb N}
\newcommand{\mc}{\mathcal}
\newcommand{\f}{\frac}
\newenvironment{proof}{{\noindent\bf Proof.}}{\hfill$\Box$\\}
\begin{document}

\title{Isotonic regression and isotonic projection
\thanks{{\it 2010 AMS Subject Classification:} Primary 62J99,
Secondary 90C20; {\it Keywords and phrases:} isotonic regression, isotonic projection, quadratic optimization, ordered Euclidean space}
}
\author{A. B. N\'emeth\\Faculty of Mathematics and Computer Science\\Babe\c s Bolyai University, Str. Kog\u alniceanu nr. 1-3\\RO-400084 Cluj-Napoca, Romania\\email: nemab@math.ubbcluj.ro \and S. Z. N\'emeth\\School of Mathematics, The University of Birmingham\\The Watson Building, Edgbaston\\Birmingham B15 2TT, United Kingdom\\email: nemeths@for.mat.bham.ac.uk}
\date{}
\maketitle

\begin{abstract}

The note describes the cones in the Euclidean space
admitting isotonic metric projection with respect to
the coordinate-wise ordering. As  a consequence it is showed
that the metric projection onto the isotonic regression cone
(the cone defined by the general isotonic regression problem) admits
a projection which is isotonic with respect to the coordinate-wise 
ordering. 

\end{abstract}

\section{Introduction}

The isotonic regression problem \cite{Kruskal1964,BarlowBartholomewBremnerBrunk1972,RobertsonWrightDykstra1988,BestChakravarti1990,WuWoodroofeMentz2001,DeLeeuwHornikMair2009,ShivelySagerWalker2009} 
and its solution is intimately related to the
metric projection into a cone of the Euclidean vector space. In fact the
isotonic regression problem is a special quadratic optimization problem. 
It is desirable to relate the metric projection onto a closed
convex set to some order theoretic properties of the projection itself, which
can facilitate the solution of some problems. When the underlying set is a convex
cone, then the most natural is to consider the order relation
defined by the cone itself. This approach gives rise to the notion of the
isotonic projection cone, which by definition is a cone with the metric
projection onto it isotonic with respect to the order relation endowed by the cone itself.
As we shall see, the two notions of isotonicity, the first related to
the regression problem and the second to the metric projection, are at the first
sight rather different. The fact that the two notions are in fact
intimately related (this relation constitute the subject of this note) is 
somewhat accidental and it derives from semantical reasons.  

The relation of the two notions is observed and
taken advantage in the paper \cite{GuyaderJegouNemethNemeth2014}.
There was exploited the fact that the totally ordered isotonic regression cone
is an isotonic projection cone too. 

The problem occurs as a particular case of the following  
more general question: \emph{How does a closed convex set in the Euclidean space
which admits a metric projection isotonic with respect to some vectorial ordering on the space look like?}

It turns out, that the problem is strongly related to some
lattice-like operations defined on the space, and in particular
to the Euclidean vector lattice theory. (\cite{NemethNemeth2013})
When the ordering is the coordinate-wise one, the problem goes
back in the literature to \cite{Topkis1976,Veinott1981,QueyranneTardella2006,Isac1996,NishimuraOk2012}. However, we shall
ignore these connections in order to simplify the exposition.
Thus, the present note, besides proving some new results, has the role to bring together
some previous results and to present them in a simple unified form.

\section{Preliminaries} \label{preliminaries}

Denote by $\R^m$ the $m$-dimensional Euclidean space endowed with the scalar 
product $\lng\cdot,\cdot\rng:\R^m\times\R^m\to\R,$ and the Euclidean norm $\|.\|$ and topology 
this scalar product defines.

Throughout this note we shall use some standard terms and results from convex geometry 
(see e.g. \cite{Rockafellar1970}). 

Let $K$ be a \emph{convex cone} in $\R^m$, i. e., a nonempty set with
(i) $K+K\subset K$ and (ii) $tK\subset K,\;\forall \;t\in \R_+ =[0,+\infty)$.
The convex cone $K$ is called \emph{pointed}, if $K\cap (-K)=\{0\}.$
The cone $K$ is {\it generating} if $K-K=\R^m$. $K$ is generating
if and only if $\intr K\not= \emptyset.$

A closed, pointed generating convex cone is called \emph{proper}.
 
For any $x,y\in \R^m$, by the equivalence $x\leq_K y\Leftrightarrow y-x\in K$, the 
convex cone $K$ 
induces an {\it order relation} $\leq_K$ in $\R^m$, that is, a binary relation, which is 
reflexive and transitive. This order relation is {\it translation invariant} 
in the sense that $x\leq_K y$ implies $x+z\leq_K y+z$ for all $z\in \R^m$, and 
{\it scale invariant} in the sense that $x\leq_Ky$ implies $tx\leq_K ty$ for any $t\in \R_+$.
Conversely, if $\preceq$ is a translation invariant and scale invariant order relation on $\R^m$, then 
$\preceq=\leq_K$ with $K=\{x\in\R^m:0\preceq x\}$ a convex cone. If $K$ is pointed, then $\leq_K$ is 
\emph{antisymmetric} too, that is $x\leq_K y$ and $y\leq_K x$ imply that $x=y.$ Conversely, if the 
translation invariant and scale invariant order relation $\preceq$ on $\R^m$ is also antisymmetric, then the convex cone 
$K=\{x\in\R^m:0\preceq x\}$ is also pointed. (In fact it would be more appropriate to call the reflexive and transitive binary relations preorder
relations and the reflexive transitive and antisymmetric binary relations partial order relations. However, for simplicity of the terminology we 
decided to call both of them order relations.)


The set
$$ K= \cone\{x_1,\dots,x_m\}:=\{t^1x_1+\dots+t^m x_m:\;t^i\in \R_+,\;i=1,\dots,m\}$$
with $x_1,\,\dots,\,x_m$ linearly independent vectors is called a \emph{simplicial cone}.
A simplicial cone is closed, pointed and generating.

The \emph{dual} of the convex cone $K$ is the set
$$K^*:=\{y\in \R^m:\;\lng x,y\rng \geq 0,\;\forall \;x\in K\},$$
with $\lng\cdot,\cdot\rng $ the standard scalar product in $\R^m$.

The cone $K$ is called \emph{self-dual}, if $K=K^*.$ If $K$
is self-dual, then it is a generating, pointed, closed convex cone.

In all that follows we shall suppose that $\R^m$ is endowed with a
Cartesian reference system with the standard unit vectors $e_1,\dots,e_m$.
That is, $e_1,\dots,e_m$ is an orthonormal system of vectors in the sense
that $\lng e_i,e_j\rng =\delta_i^j$, where $\delta_i^j$ is the Kronecker symbol.
Then, $e_1,\dots,e_m$ form a basis of the vector space $\R^m$. If $x\in \R^m$, then
$$x=x^1e_1+\dots+x^me_m$$
can be characterized by the ordered $m$-tuple of real numbers $x^1,\dots,x^m$, called
\emph{the coordinates of} $x$ with respect the given reference system, and we shall write
$x=(x^1,\dots,x^m).$ With this notation we have $e_i=(0,\dots,0,1,0,\dots,0),$
with $1$ in the $i$-th position and $0$ elsewhere. Let
$x,y\in \R^m$, $x=(x^1,\dots,x^m)$, $y=(y^1,\dots,y^m)$, where $x^i$, $y^i$ are the coordinates of
$x$ and $y$, respectively with respect to the reference system. Then, the scalar product of $x$
and $y$ is the sum
$\lng x,y\rng =\sum_{i=1}^m x^iy^i.$

The set
\[\R^m_+=\{x=(x^1,\dots,x^m)\in \R^m:\; x^i\geq 0,\;i=1,\dots,m\}\]
is called the \emph{nonnegative orthant} of the above introduced Cartesian
reference system. A direct verification shows that $\R^m_+$ is a
self-dual cone. The order relation $\le_{\R^m_+}$ induced by $\R^m_+$ is called \emph{coordinate-wise ordering}.

Besides the non-negative orthant, given a Cartesian reference system,
the important class of \emph{isotonic regression cones} should be mentioned.
Let $w^i>0$, $i=1,\dots,m$ be weights and $(V=\{1,\dots,m\}, E)$ be a directed graph of vertices $V$ and edges $E\subset V\times V$ and
without loops (a so called \emph{simple directed graph}). (If $(i,j)\in E$, then $i$ is called its \emph{tail},
$j$ is called its \emph{head}.) 
 Then we shall call the set
 $$K^w_E=\lf\{ x\in \R^m:\;\f{x^i}{\sqrt{w^i}}\leq \f{x^j}{\sqrt{w^j}},\;\forall(i,j)\in E\rg\}$$
 the \emph{isotonic regression cone defined by the relations $E$ and the weights $w^i$}.  

If $(V,E)$ is connected directed simple graph for which each vertex is the
tail respective a head of at most one edge, then $K^w_E$ is called \emph{weighted monotone cone}.
In this case $K^w_E$ can be written (after a possible permutation of the standard unit vectors) in the form 
$$K^w_E=\lf\{ x\in \R^m:\;\f{x^1}{\sqrt{w^1}}\leq \f{x^2}{\sqrt{w^2}}\leq \dots\leq \f{x^m}{\sqrt{w^m}}\rg\}.$$


A \emph{hyperplane} (through $b\in\R^m$) is a set of form
\begin{equation}\label{hyperplane}
H(a,b)=\{x\in \R^m:\;\lng a,x\rng =\lng a,b\rng, \;a\not= 0\}.
\end{equation}
The nonzero vector $a$ in the above formula is called \emph{the normal}
of the hyperplane. 

A hyperplane $H(a,b)$ determines two \emph{closed half-spaces} $H_-(a,b)$ and
$H_+(a,b)$  of $\R^m$, defined by

\[H_-(a,b)=\{x\in \R^m:\; \lng a,x\rng \leq \lng a,b\rng\},\]
and
\[H_+(a,b)=\{x\in \R^m:\; \lng a,x\rng \geq \lng a,b\rng\}.\]

The cone $K\subset \R^m$ is called \emph{polyhedral} if it can be  represented in the form
\begin{equation}\label{poly}
K=\cap_{k=1}^n H_-(a_k,0).
\end{equation}

If $\intr K\not=\emptyset$, and the representation (\ref{poly}) is irredundant, then $K\cap H(a_k,0)$ is an 
$m-1$-dimensional convex cone $(k=1,\dots,n)$ and is called a \emph{facet of $K$}.

The simplicial cone and the isotonic regression cones are polyhedral.






\section{Metric projection and isotonic projection sets}

Denote by $P_D$ 
the projection mapping onto a nonempty closed convex set $D\subset \R^m,$ 
that is the mapping which associate
to $x\in \R^m$ the unique nearest point of $x$ in $D$ (\cite{Zarantonello1971}):

\[ P_Dx\in D,\;\; \textrm{and}\;\; \|x-P_Dx\|= \inf \{\|x-y\|: \;y\in D\}. \]

Given an order relation $\preceq$ in $\R^m$, the closed convex set is
said an \emph{isotonic projection set} if from $x\preceq y,\;x,\,y\in \R^m$,
it follows $P_Dx\preceq P_Dy$.

If $\preceq =\leq_K$ for some cone $K$, then the isotonic projection set $D$ is
called \emph{$K$-isotonic}.

If the cone $K$ is $K$-isotonic then it is called
an \emph{isotonic projection cone}. 

For $K=\R^m_+$ we have $P_Kx=x^+$ where $x^+$ is the vector
formed with the non-negative coordinates of $x$ and $0$-s in place of
negative coordinates. Since $x\leq_K y$ implies
$x^+\leq_K y^+$, it follows that $\R^m_+$ is an isotonic
projection cone.

We have the following geometric characterization of a
closed, generating isotonic projection cones (Theorem 1 and Corollary 1 in \cite{GuyaderJegouNemethNemeth2014}):

\begin{theorem}\label{isochar}
The closed generating cone $K\subset \R^m$ is an isotonic projection
cone if and only if its dual $K^*$ is a simplicial cone in the subspace it spans
generated by vectors with mutually non-acute angles.
\end{theorem}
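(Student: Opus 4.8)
The plan is to analyse the piecewise-linear structure of the metric projection $P_K$ and to reduce the isotonicity of $P_K$ to a condition on its finitely many linear pieces. First I would reduce to the proper case: if $\ell=K\cap(-K)$ is the lineality space, then $K^*\subseteq\ell^\perp$ and $\spa K^*=\ell^\perp$, while along $\ell$ every pair of points is $\leq_K$-comparable; decomposing $\R^m=\ell\oplus\ell^\perp$ shows that $P_K$ acts as the identity on $\ell$ and as $P_{K\cap\ell^\perp}$ on $\ell^\perp$, so $K$ is isotonic iff the proper cone $\hat K=K\cap\ell^\perp$ is isotonic in $\ell^\perp$. Thus I may assume $K$ is proper and $K^*$ full-dimensional, so that ``$K^*$ simplicial in the subspace it spans'' simply reads ``$K^*$ simplicial''. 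Writing the extreme rays of $K^*$ (equivalently the inner facet normals of $K$) as $u_1,\dots,u_n$, the order becomes coordinate-wise in the functionals $\lng u_i,\cdot\rng$, since $x\leq_K y$ iff $\lng u_i,x\rng\le\lng u_i,y\rng$ for every $i$.

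The key reduction I would establish is: $P_K$ is $\leq_K$-isotonic if and only if every linear piece of $P_K$ maps $K$ into $K$. By Moreau's decomposition $x=P_Kx+P_{K^\circ}x$ with $K^\circ=-K^*$, on the normal cone of each face of $K$ the map $P_K$ equals the orthogonal projection $P_{W_S^\perp}$ onto the orthogonal complement of $W_S=\spa\{u_i:i\in S\}$, where $S$ is the active index set. For the ``if'' part, along the segment $x_t=x+t(y-x)$ with $y-x\in K$ the path $t\mapsto P_Kx_t$ is Lipschitz and piecewise linear with a.e. derivative $P_{W_{S(t)}^\perp}(y-x)\in K$; integrating and using that $K$ is a closed convex cone yields $P_Ky-P_Kx\in K$. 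The converse follows by taking $y=x+\varepsilon z$ with $x$ in the interior of a region and $z\in K$ small.

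For the direction ``$K^*$ simplicial with non-acute generators $\Rightarrow K$ isotonic'', the main tool is the theory of Stieltjes matrices. For an active set $S$ the Gram matrix $G_S=(\lng u_i,u_j\rng)_{i,j\in S}$ is symmetric, positive definite, and has nonpositive off-diagonal entries, hence is a Stieltjes matrix with $G_S^{-1}\ge 0$ entrywise. Expressing $P_{W_S^\perp}$ through $G_S^{-1}$ and testing on the dual basis $v_1,\dots,v_m$ (the generators of $K$, $\lng u_i,v_j\rng=\delta_i^j$), a direct computation gives $\lng u_j,P_{W_S^\perp}v_k\rng=-\sum_{i\in S}(G_S^{-1})_{ik}\lng u_j,u_i\rng\ge 0$ for $j\notin S$ and $=0$ for $j\in S$; hence $P_{W_S^\perp}v_k\in K$ and $P_{W_S^\perp}(K)\subseteq K$ for every $S$. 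By the key reduction, $K$ is isotonic.

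The converse, and in particular the structural claim that $K^*$ must be simplicial, is where I expect the real difficulty. Restricting to the full-dimensional regions where a single facet $i$ is active, $P_K$ there coincides with the hyperplane projection $x\mapsto x-\lng u_i,x\rng u_i/\|u_i\|^2$, and requiring this to preserve $K$ forces $\lng u_i,u_j\rng\le 0$ for neighbouring facet normals; propagating this to all pairs and, crucially, showing that $K$ is polyhedral with only finitely many facets is the delicate step, since a priori an isotonic projection cone is not assumed polyhedral. Once pairwise non-acuteness of all extreme rays $u_i$ of $K^*$ is secured, simpliciality is automatic: any family with pairwise nonpositive inner products lying in an open half-space (guaranteed here by $\lng w,u_i\rng>0$ for $w\in\intr K$) is linearly independent — splitting a vanishing combination into positive and negative parts $z=\sum_{P}c_iu_i=\sum_{N}(-c_j)u_j$ forces $\|z\|^2\le 0$, hence $z=0$, and then $\lng w,z\rng=0$ forces all coefficients to vanish. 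Therefore $n\le m$, the $u_i$ are independent, and $K^*$ is simplicial, completing the characterization.
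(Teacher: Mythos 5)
Your positive direction and your reductions are essentially sound, and it is worth saying so: the splitting $\R^m=\ell\oplus\ell^\perp$ along the lineality space, the identification of the linear pieces of $P_K$ with the projections $P_{W_S^\perp}$ (valid once $K$ is polyhedral), the integration argument along the segment $x_t$, and the Stieltjes-matrix computation showing $P_{W_S^\perp}(K)\subseteq K$ when the generators of $K^*$ are pairwise non-acute are all correct; so is the half-space/linear-independence lemma at the end. For calibration: the paper itself contains no proof of Theorem \ref{isochar} --- it quotes the statement from \cite{GuyaderJegouNemethNemeth2014}, which in turn rests on the main theorem of \cite{IsacNemeth1986} --- so your proposal has to stand entirely on its own.

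It does not, because of the necessity direction. Everything you do there presupposes that $K$ is polyhedral: you speak of facets, of ``full-dimensional regions where a single facet is active'', and of the piecewise-linear structure of $P_K$, none of which exists for a general closed generating cone (a circular cone has no facets, and its projection has no linear pieces). You flag this yourself (``showing that $K$ is polyhedral \dots is the delicate step''), but flagging it is not proving it, and this step is precisely the mathematical core of the Isac--N\'emeth theorem: one must show, with no facet structure available, that isotonicity of $P_K$ forces every pair of extreme vectors of $K^*$ to make a non-acute angle --- for instance by first proving that isotonicity forces the orthogonal projection onto every tangent hyperplane of $K$ to map $K$ into $K$, and then extracting the angle condition from that; only after this does your independence lemma bound the number of extreme rays of $K^*$ by $m$ and deliver polyhedrality and simpliciality. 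As written, your argument excludes no non-polyhedral cone from being an isotonic projection cone, so the ``only if'' half of the theorem is simply not established. (A smaller, repairable point: even in the polyhedral case you obtain non-acuteness only for ``neighbouring'' facet normals and invoke an unexplained ``propagation''; in fact testing $L_iu_j$ against a point $x\in F_j$ with $\lng u_i,x\rng>0$, which exists whenever the representation is irredundant, gives $\lng u_i,u_j\rng\le 0$ for all pairs at once.)
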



\section{The nonnegative orthant and its isotonic projection subcones}

If $\R^m_+$ is the nonnegative
	orthant of a Cartesian system, then we have the following theorem (Corollaries 1 and 3 in \cite{NemethNemeth2013}):
	
\begin{theorem}\label{orthCinv}
	Let $C$ be a closed convex set with nonempty interior of the coordinate-wise ordered
	Euclidean space $\R^m$. Then, the following assertions are equivalent:
	\begin{enumerate}
	\item [\emph{(i)}]The projection $P_C$ is $\R^m_+$-isotonic;
	\item [\emph{(ii)}]\begin{equation}\label{orthCinv1}
	C=\cap_{i\in\N} H_-(a_i,b_i),
	\end{equation}
	where each hyperplane $H(a_i,b_i)$ is tangent to $C$ and the normals $a_i$ 
	are nonzero vectors $a_i=(a_i^1,\dots,a_i^m)$ with the properties 
		$a_i^ka_i^l\leq 0$
		whenever $k\not= l,\;\;i\in \N.$

\end{enumerate}
\end{theorem}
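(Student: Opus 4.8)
The plan is to factor the whole equivalence through the purely order-theoretic property that $C$ be a \emph{sublattice} of the coordinate-wise ordered space, that is, that $x\su y\in C$ and $x\sa y\in C$ whenever $x,y\in C$, where $(x\su y)^k=\max\{x^k,y^k\}$ and $(x\sa y)^k=\min\{x^k,y^k\}$. Concretely I would prove the chain (i)~$\Leftrightarrow$~``$C$ is a sublattice''~$\Leftrightarrow$~(ii). This middle object is the right hinge: the sublattice notion is precisely the one studied in \cite{Topkis1976,Veinott1981,QueyranneTardella2006} cited in the introduction, and it turns the analytic statement (i) into something checkable against the half-space description (ii).

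For the step ``sublattice $\Rightarrow$ (i)'' I would use only the variational characterization of the projection, namely $P_Cx\in C$ and $\lng x-P_Cx,\,c-P_Cx\rng\le 0$ for all $c\in C$. Take $x\le_{\R^m_+}y$, set $p=P_Cx$, $q=P_Cy$ and $d=p-q$. Since $C$ is a sublattice, $p\sa q,\,p\su q\in C$, and because $(p\sa q)-p=-(d)^+$ and $(p\su q)-q=(d)^+$, testing the inequality for $p$ against $p\sa q$ gives $\lng x-p,(d)^+\rng\ge 0$, while the one for $q$ against $p\su q$ gives $\lng y-q,(d)^+\rng\le 0$. Subtracting yields $\lng x-y,(d)^+\rng\ge\lng d,(d)^+\rng=\|(d)^+\|^2$; as $x-y\le_{\R^m_+}0$ and $(d)^+\ge_{\R^m_+}0$ the left-hand side is $\le 0$, forcing $(d)^+=0$, i.e. $P_Cx\le_{\R^m_+}P_Cy$. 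For the converse ``(i) $\Rightarrow$ sublattice'', take $a,b\in C$ and $v=a\su b$; isotonicity applied to $a\le_{\R^m_+}v$ and $b\le_{\R^m_+}v$ gives $P_Cv\ge_{\R^m_+}a$ and $P_Cv\ge_{\R^m_+}b$, hence $P_Cv\ge_{\R^m_+}v$. Writing $w=P_Cv$ and testing the variational inequality for $v$ against $a$ and against $b$, each summand of $\lng v-w,a-w\rng$ and of $\lng v-w,b-w\rng$ is a product of two non-positive numbers, so each inner product is simultaneously $\le 0$ and $\ge 0$, hence $0$ term by term; comparing with $v^k=\max\{a^k,b^k\}$ this forces $w=v$, so $v\in C$. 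The argument for $a\sa b$ is symmetric, so $C$ is a sublattice.

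It remains to identify the sublattice property with (ii). The implication ``(ii) $\Rightarrow$ sublattice'' reduces to a single half-space: the condition $a_i^ka_i^l\le 0$ for $k\ne l$ says precisely that $a_i$ has at most one positive and at most one negative coordinate, and a short computation with $\max$ and $\min$ then gives $\lng a_i,p\su q\rng\le\max\{\lng a_i,p\rng,\lng a_i,q\rng\}$ and likewise for $p\sa q$, so each $H_-(a_i,b_i)$ is a sublattice; an arbitrary intersection of sublattices is again a sublattice, since $\su$ and $\sa$ are computed coordinate-wise. The reverse implication, ``sublattice $\Rightarrow$ (ii)'', is where I expect the main obstacle. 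The subtlety is that not every supporting hyperplane of a sublattice carries a sign-condition normal (already at the apex of $\R^m_+$ the outer normals fill all of $-\R^m_+$), so one cannot simply take all tangent half-spaces. Instead, using that $C$ has nonempty interior, one must show that the tangent half-spaces whose normals already satisfy the sign condition intersect down exactly to $C$; equivalently, that any $z\notin C$ can be separated from $C$ by a tangent hyperplane with a sign-condition normal. This is precisely the point at which the submodularity built into convex sublattices must be exploited, following the structural characterization of closed convex sublattices in \cite{Topkis1976,Veinott1981,QueyranneTardella2006}. Once this separation is available, the countability of the family $\{H_-(a_i,b_i)\}_{i\in\N}$ in (ii) follows from the second countability of $\R^m$, completing the equivalence.
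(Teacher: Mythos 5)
The paper itself contains no proof of this theorem: it is imported wholesale as ``Corollaries 1 and 3 in \cite{NemethNemeth2013}'', so your attempt cannot be measured against an internal argument, only on its own terms. Your architecture --- routing the equivalence through the property that $C$ is a sublattice of the coordinate-wise ordered space --- is exactly the connection to \cite{Topkis1976,Veinott1981,QueyranneTardella2006} that the introduction says it will ``ignore \dots in order to simplify the exposition'', so it is a legitimate and in fact the classical route. Three of your four implications are complete and correct. In ``sublattice $\Rightarrow$ (i)'', with $d=p-q$ the identities $(p\sa q)-p=-(d)^+$ and $(p\su q)-q=(d)^+$ are right, and the two variational inequalities combine to $\|(d)^+\|^2\le\lng x-y,(d)^+\rng\le 0$, which kills $(d)^+$. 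In ``(i) $\Rightarrow$ sublattice'', the term-by-term sign analysis of $\lng v-w,a-w\rng$ and $\lng v-w,b-w\rng$ is valid: for each $k$ either $v^k=w^k$ or $a^k=b^k=w^k$, and in the second case $v^k=\max\{a^k,b^k\}=w^k$ anyway, so $v=w\in C$. The implication ``(ii) $\Rightarrow$ sublattice'' is also sound: the sign condition forces $a_i$ to have at most one positive and at most one negative coordinate, such half-spaces are stable under $\su$ and $\sa$, and stability passes to intersections.

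The genuine gap is the fourth implication, ``sublattice $\Rightarrow$ (ii)'', and you concede it: you do not prove that every $z\notin C$ is cut off by a \emph{tangent} half-space whose normal satisfies $a^ka^l\le 0$; you defer to the structural characterization in \cite{Topkis1976,Veinott1981,QueyranneTardella2006}. That direction is where the entire geometric content of the theorem lives --- the three implications you do prove are soft --- so as a proof the attempt is incomplete. Note also that the deferred citation does not close the statement as written even if granted: the bimonotone representation of Queyranne--Tardella produces \emph{containing} half-spaces, whereas (ii) demands hyperplanes tangent to $C$, and this is precisely where the nonempty-interior hypothesis (which your sublattice equivalence never uses) must enter. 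A concrete way to finish would be: at a smooth boundary point $u$ with outer unit normal $a$, Zarantonello's first-order expansion $P_C(u+h)=u+P_{T_C(u)}(h)+o(\|h\|)$ shows that if $a^ka^l>0$ for some $k\ne l$, then suitable comparable points $u+sa$ and $u+sa\pm se_k$ have projections ordered the wrong way in the $l$-th coordinate, contradicting (i); hence the normal at every smooth boundary point satisfies the sign condition. Since a closed convex set with nonempty interior is the intersection of its tangent half-spaces at smooth boundary points, and a Lindel\"of argument extracts a countable subfamily, this yields (ii). Without some such argument, the chain does not close.
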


\begin{example}\label{3dimsimpl}
Consider the space $\R^3$ endowed with a Cartesian reference system,
and suppose
$$K_1=H_-((-2,1,0),0)\cap H_-((1,-2,0),0)\cap H_-((0,0,-1),0),$$
and
$$K_2=H_-((-2,1,0),0)\cap H_-((1,-2,0),0)\cap H_-((0,1,-1),0).$$
Then $K_1$ and $K_2$ are simplicial cones in $\R^3_+$, $x=(1,1,2)\in \intr K_i,\;i=1,2.$
Since
$$K_1= \cone \{(-2,1,0),(1,-2,0),(0,0,-1)\}^\perp$$
and
$$K_2= \cone \{(-2,1,0),(1,-2,0),(0,1,-1)\}^\perp,$$
using the main result in \cite{IsacNemeth1986} we see that $K_1$
is itself an isotonic projection cone, while $K_2$ is not. 
Obviously, $K_1$ and $K_2$ are both $\R^3_+$-isotonic projection sets.
\end{example}

\begin{example}\label{3dim}
Let us consider the space $\R^3$ endowed with a Cartesian reference system.

Consider the vectors
\begin{gather*}
	a_1=(-2,1,0),\;a_2=(1,-2,0),\;a_3=(-2,0,1),\;a_4=(1,0,-2),\; a_5=(0,-2,1),\\
	a_6=(0,1,-2).
\end{gather*}
Then,
$$K=\cap_{i=1}^6 H_-(a_i,0)\subset \R^3_+$$
is by Theorem \ref{orthCinv} an $\R^3_+$-isotonic projection cone with six facets.

Indeed, $\lng a_1,x\rng \leq 0$ and $\lng a_2,x\rng \leq 0$ imply that $x^1\geq 0$ and
$x^2\geq 0.$ We can similarly show that $x\in K$ yields $x^3\geq 0.$ Thus,
$K\subset\R^3_+$. For $y=(1,1,1)$ we have $\lng a_i,y\rng <0$. Hence $y\in \intr K$. It follows that $K$ is a proper cone and 
the sets $H(a_i,0)\cap K$, $i=1,\dots,6$ are different facets of $K$.
\end{example}

Next we shall show that the cone in Example \ref{3dim} is in some sense extremal among the $\R^3_+$-isotonic subcones in $\R^3_+$. More precisely
we have

\begin{theorem}\label{orthisosubcone}

If $K$ is a generating
cone in  $\R^m$, then it is $\R^m_+$-isotonic,
if and only if it is a polyhedral cone of the form
\begin{equation}\label{k1}
K= \cap_{k<l} (H_-(a_{kl1},0)\cap H_-(a_{kl2},0)), \;\;k,\,l\in \{1,\dots,m\}
\end{equation}
where $a_{kli}$ are nonzero vectors with $a_{kli}^ka_{kli}^l\leq 0$ and $a_{kli}^j =0$ for $j\notin \{k,l\},\;i=1,2.$
Hence $K$ possesses at most $m(m-1)$ facets. There exists a cone $K$ of the above form with exactly $m(m-1)$ facets.

\end{theorem}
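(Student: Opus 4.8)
The plan is to derive the characterization from Theorem \ref{orthCinv} by analysing which normals the sign condition permits, and then to exhibit the extremal cone by generalizing Example \ref{3dim}. I first treat the forward implication. Assume $K$ is generating and $\R^m_+$-isotonic. Since $K$ is generating, $\intr K\neq\emptyset$, so Theorem \ref{orthCinv} applies and yields a representation $K=\cap_{i\in\N}H_-(a_i,b_i)$ by tangent hyperplanes whose normals satisfy $a_i^pa_i^q\le 0$ for all $p\neq q$. The first observation is a pointwise constraint on such a normal: if $a=(a^1,\dots,a^m)$ satisfies $a^pa^q\le 0$ for all $p\neq q$, then $a$ cannot have two coordinates of the same sign, so it has at most one positive and at most one negative coordinate; in particular its support has at most two elements, and when it has exactly two these carry opposite signs. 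Thus every $a_i$ is supported on some coordinate pair $\{k,l\}$.

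Next I remove the offsets and the redundancy. Because $K$ is a cone, each supporting half-space may be taken homogeneous: $x\in K$ implies $tx\in K$ for all $t\ge 0$, so $\lng a_i,x\rng\le b_i$ for all such $t$ forces $\lng a_i,x\rng\le 0$ and $b_i\ge 0$, whence $K=\cap_i H_-(a_i,0)$. I then group the normals according to the pair supporting them, and for a pair $\{k,l\}$ let $C_{kl}$ be the convex cone generated, inside $\spa\{e_k,e_l\}$, by those $a_i$ supported on $\{k,l\}$. The crucial point is that each $C_{kl}$ is \emph{pointed}: fixing $x_0\in\intr K$ we have $\lng a_i,x_0\rng<0$ for every $i$, hence $\lng w,x_0\rng<0$ for every nonzero $w$ in the cone generated by the normals, which rules out $w$ and $-w$ lying in $C_{kl}$ simultaneously. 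A pointed convex cone in a two-dimensional plane has at most two extreme rays and is generated by them, and these extreme rays, being limits of the $a_i$ in $C_{kl}$, again lie in the closed region $\{v:v^kv^l\le 0\}$. Replacing, on each plane, the family of normals by its (at most two) extreme rays $a_{kl1},a_{kl2}$ does not change the corresponding intersection of half-spaces, so
\begin{equation*}
K=\cap_{k<l}\lf(H_-(a_{kl1},0)\cap H_-(a_{kl2},0)\rg),
\end{equation*}
which is exactly (\ref{k1}); each $a_{kli}$ is nonzero, supported on $\{k,l\}$, and satisfies $a_{kli}^ka_{kli}^l\le 0$. (When a pair contributes fewer than two extreme rays one repeats a generator or drops the constraint; this only lowers the count.) Since there are $\binom{m}{2}$ pairs and at most two half-spaces per pair, $K$ has at most $m(m-1)$ facets.

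For the converse, suppose $K$ is generating and has the form (\ref{k1}) with the stated sign conditions. Every normal $a_{kli}$ then satisfies $a_{kli}^pa_{kli}^q\le 0$ for all $p\neq q$, because any product involving a coordinate outside $\{k,l\}$ vanishes and the single product $a_{kli}^ka_{kli}^l$ is $\le 0$ by hypothesis. Discarding the redundant half-spaces yields an irredundant representation of $K$ by tangent hyperplanes whose normals still obey this sign condition, so condition (ii) of Theorem \ref{orthCinv} holds and therefore $P_K$ is $\R^m_+$-isotonic. This establishes the equivalence.

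Finally, to produce a cone attaining exactly $m(m-1)$ facets I generalize Example \ref{3dim}: for each ordered pair $(k,l)$ with $k\neq l$ set $a_{(k,l)}=-2e_k+e_l$ and put $K=\cap_{k\neq l}H_-(a_{(k,l)},0)$. Each normal is supported on $\{k,l\}$ with $a_{(k,l)}^ka_{(k,l)}^l=-2\le 0$, so $K$ has the required form, and $\lng a_{(k,l)},\mathbf{1}\rng=-1<0$ shows $\mathbf{1}=(1,\dots,1)\in\intr K$, so $K$ is generating. It remains to verify irredundancy, i.e.\ that each of the $m(m-1)$ hyperplanes is a facet; for the pair $(k,l)$ the point with coordinates $y^k=1$, $y^l=2$ and $y^j=3/2$ otherwise lies on $H(a_{(k,l)},0)$ while satisfying $\lng a_{(p,q)},y\rng<0$ for every other ordered pair, a short case check. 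Hence all $m(m-1)$ hyperplanes are facets and $K$ attains the bound. I expect the main obstacle to be the forward implication, specifically the passage from the possibly infinite family of Theorem \ref{orthCinv} to a finite one: the decisive insight is that generatingness of $K$ forces each planar slice $C_{kl}$ to be pointed, so that two opposite-sign extreme rays suffice per coordinate plane (a non-pointed slice would be a half-plane requiring three generators, contradicting the bound).
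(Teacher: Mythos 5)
Your overall strategy coincides with the paper's: invoke Theorem \ref{orthCinv}, homogenize the half-spaces, observe that the sign condition forces every normal to be supported on a coordinate pair, reduce each planar family of normals to at most two, and generalize Example \ref{3dim} to get a cone with exactly $m(m-1)$ facets. Your homogenization step (getting $b_i=0$) and your explicit irredundancy check for the extremal cone (the point $y^k=1$, $y^l=2$, $y^j=3/2$ elsewhere) are sound, and in fact more explicit than the paper's symmetry argument. However, there is a genuine gap at the crux of the necessity direction, namely the passage from a possibly infinite planar family to two half-spaces. You correctly prove that the conic hull $C_{kl}$ of the normals supported on $\{k,l\}$ is pointed (every nonzero element has strictly negative scalar product with $x_0\in\intr K$), but the statement you then use --- ``a pointed convex cone in a two-dimensional plane has at most two extreme rays and is generated by them'' --- is \emph{false} for non-closed cones, and $C_{kl}$ may be non-closed because Theorem \ref{orthCinv} allows countably many hyperplanes. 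For instance, the conic hull of $\{(1-1/n,-1):n\in\N\}$ is pointed but has a single extreme ray, through $(0,-1)$, and is not generated by it; replacing that family by its extreme rays would enlarge the intersection of half-spaces from $\{x:x^2\ge x^1\}\cap\{x:x^2\ge 0\}$ to $\{x:x^2\ge 0\}$. Your phrase ``being limits of the $a_i$'' shows you really intend the extreme rays of the closure of $C_{kl}$; but pointedness does not pass to closures: with $x_0=(1,1)$ the normals $(1-1/n,-1)$ and $(-1,1-1/n)$ all have strictly negative product with $x_0$, yet the closure of their conic hull is the entire half-plane $\{w:\lng w,x_0\rng\le 0\}$.

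The missing argument is precisely where the paper uses generatingness a second time: if the closed conic hull were that half-plane, then $\cap_{i\in\mc A_{kl}}H_-(a_i,0)$, being the polar of the closed hull plus the orthogonal complement of the plane $\R_{kl}$, would have dimension $m-1$ while containing $K$, contradicting $\intr K\ne\emptyset$ (in the two-dimensional example above the resulting $K$ is the ray through $(1,1)$, which is why it does not contradict the theorem itself). Your closing remark gestures at this --- ``generatingness forces each planar slice to be pointed'' --- but the reason offered there (a half-plane ``requiring three generators, contradicting the bound'') is not a contradiction of anything; the correct obstruction is dimensional, which is how the paper argues: the trace of $\cap_{i\in\mc A_{kl}}H_-(a_i,0)$ on $\R_{kl}$ is a \emph{closed} two-dimensional cone in the plane, hence has at most two extremal rays, and its polar in the plane is therefore generated by at most two vectors, which as limits of the $a_i$ still satisfy the sign conditions. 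Once this step is repaired, the remainder of your proof is correct and follows the paper's route.
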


\begin{proof}

	The sufficiency is an immediate consequence of Theorem \ref{orthCinv}. Next we prove the necessity.
Assume that $K$ is an $\R^m_+$-isotonic generating cone. By using the same Theorem \ref{orthCinv}, we have that
\begin{equation}\label{k}
	K=\cap_{i\in \mc J} H_-(a_i,0),
	\end{equation}
	where $\mc J\subset \N$ is a set of indices and
	where each hyperplane $H(a_i,0)$ is tangent to $K$ and the normals $a_i$ 
	are nonzero vectors $a_i=(a_i^1,\dots,a_i^m)$ with the properties 
		$a_i^ka_i^l\leq 0$
		whenever $k\not= l,\;\;i\in \N.$

First of all we introduce the notation
$$\mc A_{kl}=\{i: a_i^j=0,\,j\notin \{k,l\}\}, \;\;k,\,l\in \{1,\dots,m\} ,\;k<l.$$
(In Example \ref{3dim} $\mc A_{12}=\{1,2\} ,\; \mc A_{13}=\{3,4\},\; \mc A_{23}=\{5,6\}.$)

We claim that 
\begin{equation}\label{akl}
\mc A_{kl} \not= \emptyset,\; k<l,\; \textrm{and} \;\cup_{k<l} \mc A_{kl} =\mc J.
\end{equation}
This follows from the structure of the normals $a_i.$ Indeed if
$a_i$ possesses two non-zero components, say $a_i^k$ and $a_i^l$, $k<l$, then
$i\in \mc A_{kl}$. If it has only one non-zero component, say $a_i^k$ with $k<m$,
then $i\in \mc A_{km}$, or only one nonzero component $a_i^m$  then $i\in \mc A_{km}$
for $k<m$. 

Let us see that
\begin{equation}\label{kket}
\cap_{i\in \mc A_{kl}}H_-(a_i,0)=H_-(a_{i_1},0)\cap H_-(a_{i_2},0),
\end{equation}
where $H_-(a_{i_j},0)$ are among those in (\ref{k})  and the case $i_1=i_2$ is possible.

Denote by $\R_{kl}$ the bidimensional subspace in $\R^m$ endowed by the $k$-th and $l$-th axis.
Then we have the representation
$$ \cap_{i\in \mc A_{kl}}H_-(a_i,0)= \R_{kl}^\perp \times (\cap_{i\in \mc A_{kl}}H_-(a_i,0))\cap \R_{kl}.$$ 
Now, $\cap_{i\in \mc A_{kl}}H_-(a_i,0))\cap \R_{kl}$ must be a two dimensional cone in $\R_{kl}$ (since $K$ is generating), hence
it must have one or  two extremal rays. That is the intersection can be expressed by one or two terms, that is,
we can suppose that $1\leq \card \mc A_{kl}\leq 2$ and (\ref{kket}) is proved.

With these remarks we can assert that the formula (\ref{k}) becomes
\begin{equation}\label{k1bis}
	K= \cap_{k<l}\lf(\cap_{i\in \mc A_{kl}}H_-(a_i,0)\rg)=\cap_{k<l} (H_-(a_{kl1},0)\cap H_-(a_{kl2},0)),
\end{equation}
where $a_{kli}^ka_{kli}^l\leq 0$ and $a_{kli}^j =0$ for $j\notin \{k,l\},\;\;i=1,2.$

From formula (\ref{k1bis}) it follows that in the representation (\ref{k}) of
$K$ there are at most $m(m-1)$ facets $H(a_i,0)\cap K$ of $K$. 

Using the construction in Example \ref{3dim} we can construct a $K$ with exactly
$m(m-1)$ facets. To this end, let for $k<l$
$a_{kl1}$ be the vector with $a_{kl1}^k=-2,\;\;a_{kl1}^l=1$ and $a_{kl1}^j=0$ for $j\notin \{k,l\},$
and $a_{kl2}$ be the vector with $a_{kl2}^k=1,\;\;a_{kl2}^l=-2$ and $a_{kl2}^j=0$ for $j\notin \{k,l\}.$
We have that the vectors $a_{kli}$ are pairwise non-parallel. Putting these vectors in the
representation (\ref{k1bis}) we get a proper subcone of $\R^m_+$ which is $\R^m_+$-isotonic
and possesses exactly $m(m-1)$ facets. Indeed, we must see  that in this case
the representation (\ref{k1bis}) is irredundant. But this follows from the fact that
$K\subset \R^m_+$ is a polyhedral cone with $x=(1,1,\dots,1)$ an interior point. Hence some of
$F_{kli}=H(a_{kli},0)\cap K$ must be facets of $K$. Now, from the special feature of $a_{kli}$ it
follows that the sets $F_{kli}$ are structurally equivalent and if one of them is
a facet, then all of them are so.

The proof also implies that $K$ must be a polyhedral cone and if its representation (\ref{k})
is irredundant, than the set $\mc J$ must be finite.

\end{proof}

\begin{remark}
 The
representation (\ref{k1bis}) can be redundant, even if the original one in (\ref{k}) is irredundant.
Indeed, $\R^m_+$ must be of form (\ref{k}) and its irredundant representation contains
$m$ terms, while its equivalent form (\ref{k1bis}) formally contains much more terms.
In this case (\ref{k1bis}) can contain $\f{m(m-1)}{2}$ terms. But even a minimal ``dual'' decomposition of
$\R^m_+$ is of cardinality $[\f{m+1}{2}]$ and hence it contains $2[\f{m+1}{2}]$ half-spaces. 

\end{remark}

\section{Every isotonic regression cone is an $\R^m_+$-isotonic projection set}

Projecting $y\in \R^m$ into $K$ given by (\ref{k1bis}) we have to solve the following
quadratic minimization problem:

\begin{equation}\label{projprob}
P_Ky= \argmin\lf\{\sum_{i=1}^m (x^i-y^i)^2:a_{kl1}^kx^k+a_{kl1}^lx^l\leq 0,\textrm{ }a_{kl2}^kx^k+a_{kl2}^lx^l\leq 0,\textrm{ }k<l\rg\},
\end{equation}

where the cases $a_{klj}^k=0$, or  $a_{klj}^l=0$ are not excluded.

By using Theorem \ref{orthCinv}, we see that, from
$$u\leq_{\R^m_+} v,$$
it follows that
$$P_Ku\leq_{\R^m_+} P_Kv.$$

A particular case of this projection problem
is equivalent to the so called \emph{isotonic regression problem}
\cite{Kruskal1964,BarlowBartholomewBremnerBrunk1972,RobertsonWrightDykstra1988,BestChakravarti1990,WuWoodroofeMentz2001,DeLeeuwHornikMair2009,ShivelySagerWalker2009}
which can be described as follows:

For a given $y\in \R^m$ and weights $w_i>0$, $i=1,\dots,m$

\[\iso(y):=\argmin\lf\{\sum_{i=1}^m w_i(x^i-y^i)^2:x^i\leq x^j,\textrm{ }\forall(i,j)\in E\rg\},\]
where $(V=\{1,\dots,m\}, E)$ is a directed simple graph.

Indeed,

\begin{align*}
	\iso(y)=\argmin\lf\{\sum_{i=1}^m \lf(\sqrt{w^i}x^i-\sqrt{w^i}y^i\rg)^2:\f{\sqrt{w^i}x^i}{\sqrt{w^i}}\leq \f{\sqrt{w^i}x^j}{\sqrt{w^j}},
	\textrm{ }\forall(i,j)\in E\rg\}\\=\f1{\sqrt{w}}P_{K^w_E}(\sqrt{w}y),
\end{align*}

where for any $z\in\R^m$ we denote \[\sqrt{w}z=(\sqrt{w^1}z^1,\dots,\sqrt{w^m}z^m)\] and 
\[\f z{\sqrt{w}}=\lf(\f{z^1}{\sqrt{w^1}},\dots,\f{z^m}{\sqrt{w^m}}\rg),\]
and $K^w_E$ is the isotonic regression cone defined in Section 2.

To compare with this with the general projection problem (\ref{projprob}), we observe that the restrictions on $x$ for $P_{K^w_E}(y)$ are of 
the form $$ a^i_{ij}x^i +a_{ij}^jx^j\leq 0$$
with $a_{ij}^i=1/\sqrt{w^i}$ and $a_{ij}^j =-1/\sqrt{w^j},\; (i,j)\in E.$
Thus we have established the

\begin{corollary}\label{regalt}
Every isotonic regression cone $K^w_E$
is an
$\R^m_+$-isotonic projection set.
\end{corollary}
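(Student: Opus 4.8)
The plan is to realize $K^w_E$ as an intersection of half-spaces satisfying the hypotheses of part (ii) of Theorem~\ref{orthCinv}, and then to invoke that theorem to obtain part (i), i.e.\ that $P_{K^w_E}$ is $\R^m_+$-isotonic. Concretely, I would start from the defining inequalities: for each edge $(i,j)\in E$ the relation $x^i/\sqrt{w^i}\le x^j/\sqrt{w^j}$ is $\langle a_{ij},x\rangle\le 0$ with $a_{ij}=e_i/\sqrt{w^i}-e_j/\sqrt{w^j}$, so that $K^w_E=\cap_{(i,j)\in E}H_-(a_{ij},0)$.

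The two things to verify are the sign pattern of the normals and the tangency of the hyperplanes. For the first, the only nonzero coordinates of $a_{ij}$ are $a_{ij}^i=1/\sqrt{w^i}>0$ and $a_{ij}^j=-1/\sqrt{w^j}<0$; hence for $k\ne l$ the product $a_{ij}^k a_{ij}^l$ equals $-1/\sqrt{w^iw^j}<0$ when $\{k,l\}=\{i,j\}$ and vanishes otherwise, so in every case $a_{ij}^k a_{ij}^l\le 0$, which is exactly the off-diagonal condition demanded in Theorem~\ref{orthCinv}(ii). For the second, since $0\in K^w_E$ lies on every hyperplane $H(a_{ij},0)$ while $K^w_E\subset H_-(a_{ij},0)$, each such hyperplane supports---hence is tangent to---$K^w_E$. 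With these verifications Theorem~\ref{orthCinv} yields $u\le_{\R^m_+}v\Rightarrow P_{K^w_E}u\le_{\R^m_+}P_{K^w_E}v$.

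The step I expect to carry the real weight is the standing hypothesis of Theorem~\ref{orthCinv} that the set have nonempty interior, i.e.\ that $K^w_E$ be generating. This holds exactly when $E$ is acyclic, since a point making all constraints strict exists if and only if the weighted coordinates can be strictly ordered along every edge, i.e.\ iff $E$ has no directed cycle; when $E$ contains a cycle the weighted coordinates around it are forced equal and $K^w_E$ degenerates into the proper subspace $W=\spa K^w_E=\{x:\,x^i/\sqrt{w^i}=x^j/\sqrt{w^j}\text{ whenever }i,j\text{ lie in one strongly connected component of }(V,E)\}$. To cover this case I would factor the projection as $P_{K^w_E}=P^{W}_{K^w_E}\circ P_W$, where $P_W$ is the orthogonal projection onto $W$ and $P^{W}_{K^w_E}$ is the projection taken within $W$ (a valid factorization because $K^w_E\subset W$). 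One checks that $P_W$ maps $\R^m_+$ into $\R^m_+$, so it is $\R^m_+$-isotonic; and inside $W$ the cone $K^w_E$ becomes, after contracting each strongly connected component to its common weighted coordinate, a \emph{generating} isotonic regression cone on the acyclic condensation of $(V,E)$, to which the argument of the first two paragraphs applies. The crux---and the only genuinely delicate point---is to make this relative-interior reduction precise while keeping the coordinate-wise order consistent across the contraction, so that the within-$W$ projection is itself $\R^m_+$-isotonic and the composition inherits isotonicity.
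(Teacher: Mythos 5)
Your first two paragraphs are in substance the paper's own proof: the paper simply observes that the defining inequalities of $K^w_E$ give the representation $K^w_E=\cap_{(i,j)\in E}H_-(a_{ij},0)$ with $a_{ij}^i=1/\sqrt{w^i}$, $a_{ij}^j=-1/\sqrt{w^j}$ and all other components zero, notes that this fits exactly the form of normals required in condition (ii) of Theorem~\ref{orthCinv}, and concludes by that theorem. Your third paragraph, however, goes beyond the paper, and the point it raises is genuine: Theorem~\ref{orthCinv} is stated for sets with nonempty interior, $K^w_E$ has nonempty interior precisely when $(V,E)$ has no directed cycle, and the paper's notion of a simple directed graph excludes only loops, so the degenerate (cyclic) case falls under the statement of Corollary~\ref{regalt} but not, strictly speaking, under the paper's argument, which is silent on this hypothesis. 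Your sketched repair is the natural one and it does work: with $W=\spa K^w_E$ one has the valid factorization $P_{K^w_E}=P^W_{K^w_E}\circ P_W$; the linear map $P_W$ carries $\R^m_+$ into itself, since on each strongly connected component $S$ it replaces the coordinates $y^i$, $i\in S$, by $\sqrt{w^i}\,\bigl(\sum_{j\in S}\sqrt{w^j}y^j\bigr)/\bigl(\sum_{j\in S}w^j\bigr)$, a combination with positive coefficients, hence $P_W$ is isotonic; $\R^m_+\cap W$ is the nonnegative orthant of $W$ relative to the orthonormal basis $f_S=\bigl(\sum_{i\in S}w^i\bigr)^{-1/2}\sum_{i\in S}\sqrt{w^i}\,e_i$ indexed by the strongly connected components; and in these coordinates $K^w_E$ is the isotonic regression cone, with weights $\sum_{i\in S}w^i$, of the acyclic condensation of $(V,E)$, which is generating in $W$, so your main argument applies inside $W$ and the two isotonicities compose. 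Thus your proposal reproduces the paper's proof in the generating case and in addition closes a small gap that the paper passes over; what separates your sketch from a complete proof is only the writing out of these routine verifications.
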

We further have that

\begin{proposition}
The isotonic regression cone $K^w_E$ is an isotonic projection cone if and only if
in the oriented graph $(V,E)$ does not exist different edges with same tail or different edges
with same head, that is, edges of form
$(i,j)$ and $(i,k)$ with $j\not=k$, or edges of form $(i,j)$ and $(k,j)$ with $i\not= k.$
\end{proposition}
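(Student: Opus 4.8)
The plan is to reduce the statement to the dual characterisation of isotonic projection cones in Theorem~\ref{isochar} and to read the combinatorial condition off the pairwise inner products of the generators of the dual cone. Writing the defining inequalities as $\lng a_{ij},x\rng\le 0$ with $a_{ij}=\f{1}{\sqrt{w^i}}e_i-\f{1}{\sqrt{w^j}}e_j$, we have $K^w_E=\cap_{(i,j)\in E}H_-(a_{ij},0)$, so the dual is the finitely generated (hence closed) cone
\[
(K^w_E)^*=\cone\{-a_{ij}:(i,j)\in E\}.
\]
By Theorem~\ref{isochar}, a generating $K^w_E$ is an isotonic projection cone precisely when this dual is simplicial in its span and its generators make pairwise non-acute angles, so everything reduces to understanding these generators.

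First I would compute the Gram entries $\lng -a_{ij},-a_{kl}\rng=\lng a_{ij},a_{kl}\rng$ for two distinct edges. If the edges have disjoint vertex sets the product is $0$; if they meet in a single vertex $v$ the only nonzero contribution is the $v$-th one, and a one-line check gives $+1/w^v>0$ exactly when $v$ is the common \emph{tail} or the common \emph{head} of both edges, and $-1/w^v<0$ when $v$ is the head of one and the tail of the other (a doubly shared vertex gives an anti-parallel $2$-cycle). Thus the generators of $(K^w_E)^*$ are pairwise non-acute if and only if no two edges share a tail and no two edges share a head. For the sufficiency I would then note that non-acuteness already forces simpliciality in the generating case: if $u\in\intr K^w_E$ then $\lng -a_{ij},u\rng>0$ for every edge, so the generators lie in the open half-space $\{\lng\cdot,u\rng>0\}$, and a standard lemma — pairwise non-acute vectors lying in an open half-space are linearly independent, proved by splitting a vanishing combination into its positive and negative parts — shows them independent. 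Hence $(K^w_E)^*$ is simplicial with non-acute generators and Theorem~\ref{isochar} yields that $K^w_E$ is an isotonic projection cone.

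For the necessity I would use the contrapositive: if two edges share a tail (or a head) the corresponding generators form an acute pair, and, these being extreme rays of the dual, $(K^w_E)^*$ cannot be simplicial with non-acute extreme generators, so by Theorem~\ref{isochar} $K^w_E$ is not an isotonic projection cone. The clause ``these being extreme rays,'' together with the degeneracies hidden in the hypothesis, is where I expect the real difficulty to sit, not in the Gram computation. A redundant edge set describes the same cone by a larger graph: $E=\{(1,2),(2,3),(1,3)\}$ yields the totally ordered (hence isotonic) cone $\{x^1\le x^2\le x^3\}$, yet has a repeated tail and a repeated head, because $-a_{13}=-a_{12}-a_{23}$ is a \emph{non-extreme} generator; so the equivalence is only correct once one works with an irredundant description, in which each $-a_{ij}$ is genuinely an extreme ray, and I would either assume the representation by $E$ is irredundant or first pass to the facet-defining sub-family. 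Likewise a directed cycle forces equalities and makes $K^w_E$ a subspace in those coordinates, hence non-generating, so Theorem~\ref{isochar} does not literally apply; here I would argue component-wise, using that the connected components sit on disjoint coordinate blocks so that $K^w_E$ is a product and $P_{K^w_E}$ the corresponding product of projections (isotonic for the product order iff each factor is), together with the fact that orthogonal projection onto a subspace is automatically isotonic for the order that subspace induces. Managing these redundancy and degeneracy issues is the main obstacle.
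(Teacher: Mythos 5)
Your proposal follows the same skeleton as the paper's own proof: both reduce the statement to the dual characterization of Theorem~\ref{isochar} and read the combinatorial condition off the signs of the inner products $\lng a_{ij},a_{kl}\rng$ of the defining normals, with the identical case analysis (common tail or common head gives $+1/w^v>0$, head-to-tail contact gives $-1/w^v<0$, disjoint edges give $0$). But your version is genuinely more complete, in three respects, and in fact repairs gaps in the paper's argument. First, for sufficiency the paper only asserts that the normals in the irreducible representation form pairwise non-acute angles and concludes ``by the same result'' that $K^w_E$ is an isotonic projection cone, silently skipping the simpliciality requirement in Theorem~\ref{isochar}; your half-space lemma (pairwise non-acute vectors lying in an open half-space are linearly independent, proved by splitting a vanishing combination into positive and negative parts) is exactly the missing step, and your use of an interior point $u$ to place the generators $-a_{ij}$ in an open half-space is legitimate since every valid inequality is strict at an interior point. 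Second, your redundancy caveat is not pedantry: the paper's necessity argument rests on the claim that the normals $a_{1,2}$, $a_{1,3}$ of two edges with a common tail ``are normals in the irreducible representation of $K^w_E$'', which is false for a general edge set, and your example $E=\{(1,2),(2,3),(1,3)\}$ shows the proposition is literally false as stated: this $E$ has a repeated tail and a repeated head, yet $K^w_E$ is the weighted monotone cone, which is an isotonic projection cone by Corollary~\ref{moncon}. Passing to the facet-defining (irredundant) sub-family of edges, as you do, is the correct reading under which the equivalence holds, since facet normals of a full-dimensional polyhedral cone do span extreme rays of the dual. Third, Theorem~\ref{isochar} requires $K^w_E$ to be generating, which fails exactly when $(V,E)$ has directed cycles; the paper ignores this, while your block-decomposition into a product of cones on disjoint coordinate sets, together with the observation that orthogonal projection onto a subspace is automatically isotonic for the order that subspace induces, covers the degenerate case. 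In short: same route as the paper, but yours is the version that actually goes through, and it makes explicit the irredundancy hypothesis without which the statement itself needs amending.
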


\begin{proof}
Assume e. g. that $(1,2),\;(1,3)\in E$. Then the corresponding normals are
$$ a_{1,2}=(1/\sqrt{w^1},-1/\sqrt{w^2},0,\dots,0)$$
and

$$ a_{1,3}=(1/\sqrt{w^1},0,-1/\sqrt{w^3},0,\dots,0).$$
 
Then $a_{1,2}$ and $a_{1,3}$ are normals in the irreducible representation of $K^w_E$,
and $\lng a_{1,2},a_{1,3}\rng >0.$ Thus, according to Theorem \ref{isochar}
$K^w_E$ cannot be an isotonic projection cone. Conversely, if there are no vertices
with the above type multiplicity property, then the normals in the irreducible
representation of $K^w_E$ (which in fact generates $-{K^w_{E}}^{*}$) form pair-wise
non-acute angles, hence by the same result 
$K^w_E$ is an isotonic projection cone.

\end{proof}

\begin{corollary}\label{moncon}
If $K^w_E$ is an isotonic projection cone, then $(V,E)$ splits in disjoint
union of connected simple graphs with vertices being
the tails or heads of at most one edge.
The single (up to a permutation of the canonical basis) isotonic regression cone $K^w_E$, with $(V,E)$
a directed connected simple graph, which is also an isotonic projection cone
is the weighted monotone cone.
\end{corollary}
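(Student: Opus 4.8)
The plan is to read off the combinatorial structure of $(V,E)$ directly from the Proposition and then to discard the single degenerate possibility, a directed cycle, using that the cones under consideration are generating. First I would invoke the Proposition: since $K^w_E$ is an isotonic projection cone, no two distinct edges share a tail and no two distinct edges share a head. Equivalently, in $(V,E)$ every vertex is the tail of at most one edge and the head of at most one edge, i.e. has out-degree at most $1$ and in-degree at most $1$. A routine graph-theoretic argument, following from any vertex its unique outgoing edge and using that each vertex also has at most one incoming edge, so that the process either halts at a sink or closes up into a cycle at its starting point, shows that such a simple directed graph is a vertex-disjoint union of directed paths and directed cycles. Each such component is connected and again has every vertex as the tail or head of at most one edge, which is exactly the first assertion.

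Next I would treat the connected case and exclude cycles. A directed cycle $i_1\to i_2\to\dots\to i_k\to i_1$ forces $x^{i_1}/\sqrt{w^{i_1}}\le x^{i_2}/\sqrt{w^{i_2}}\le\dots\le x^{i_k}/\sqrt{w^{i_k}}\le x^{i_1}/\sqrt{w^{i_1}}$, so all these quantities must coincide; the resulting equalities are nontrivial linear equations, whence $K^w_E$ is contained in a proper subspace and has empty interior. As the isotonic projection cones governed by Theorem \ref{isochar} and the Proposition are generating, this is impossible, so a connected $(V,E)$ of this type contains no cycle. Being connected with in- and out-degree at most $1$, it is therefore a single directed path $v_1\to v_2\to\dots\to v_m$; the orientation is forced to be consistent, since a reversed edge would produce a vertex of in- or out-degree $2$.

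Finally, relabelling the standard unit vectors by the permutation $v_j\mapsto j$, the defining relations of $K^w_E$ turn into $x^1/\sqrt{w^1}\le x^2/\sqrt{w^2}\le\dots\le x^m/\sqrt{w^m}$, which is precisely the weighted monotone cone. I expect the cycle-exclusion step to be the only real obstacle: one must make sure the generating hypothesis underlying Theorem \ref{isochar} and the Proposition is genuinely in force, for it is exactly this hypothesis that forbids the degenerate line a directed cycle would produce.
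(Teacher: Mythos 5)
Your proof is correct and follows the route the paper intends: the corollary is stated there without proof, as an immediate consequence of the Proposition, and your argument is exactly that derivation made explicit. The one step the paper glosses over is the one you rightly flag as the crux, namely the exclusion of directed cycles. Under the paper's literal definition of an isotonic projection cone (which imposes no pointedness or generating assumption), a directed cycle would make $K^w_E$ a line, i.e.\ a one-dimensional subspace, and the metric projection onto a subspace is linear, hence trivially isotonic with respect to the order that subspace induces; a cycle also has in- and out-degree one at every vertex, so the Proposition's degree condition cannot rule it out either. Thus the second assertion genuinely needs the generating hypothesis (equivalently, acyclicity of $(V,E)$), which is implicit in the paper only because Theorem \ref{isochar}, on which the Proposition's proof rests, concerns closed generating cones. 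Your observation that a cycle forces the equalities $x^{i_1}/\sqrt{w^{i_1}}=\dots=x^{i_k}/\sqrt{w^{i_k}}$, hence empty interior and failure of the generating property, is the right way to discharge this; the remaining steps (decomposition into vertex-disjoint paths and cycles, and relabelling a path to obtain the weighted monotone cone) are routine and correctly carried out.
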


\bibliographystyle{abbrv}
\bibliography{isoiso}
\end{document}